\documentclass{amsart}
\usepackage{latexsym, amssymb,amsthm,amsmath,verbatim}

\begin{document}

\newtheorem{theorem}{Theorem}
\newtheorem{remark}[theorem]{Remark}
\newtheorem{proposition}[theorem]{Proposition}
\newtheorem{question}[theorem]{Question}
\newtheorem{definition}[theorem]{Definition}
\newtheorem{corollary}[theorem]{Corollary}
\newtheorem{lemma}[theorem]{Lemma}
\newtheorem{claim}[theorem]{Claim}
\newtheorem{example}[theorem]{Example}

\newcommand{\pr}[1]{\left\langle #1 \right\rangle}
\newcommand{\mH}{\mathcal{H}}
\newcommand{\mK}{\mathcal{K}}
\newcommand{\mR}{\mathcal{R}}
\newcommand{\mG}{\mathcal{G}}
\newcommand{\mA}{\mathcal{A}}
\newcommand{\mV}{\mathcal{V}}
\newcommand{\mU}{\mathcal{V}}
\newcommand{\mP}{\mathcal{P}}
\newcommand{\mB}{\mathcal{B}}
\newcommand{\C}{\mathrm{C}}
\newcommand{\mO}{\mathcal{O}}
\newcommand{\mC}{\mathcal{C}}
\newcommand{\D}{\mathrm{D}}
\renewcommand{\O}{\mathrm{O}}
\newcommand{\K}{\mathrm{K}}
\newcommand{\OD}{\mathrm{OD}}
\newcommand{\Do}{\D_\mathrm{o}}
\newcommand{\sone}{\mathsf{S}_1}
\newcommand{\gone}{\mathsf{G}_1}
\newcommand{\sfin}{\mathsf{S}_\mathrm{fin}}
\newcommand{\gfin}{\mathsf{G}_\mathrm{fin}}
\newcommand{\gn}[1]{\mathsf{G}_\mathrm{#1}}
\newcommand{\Em}{\longrightarrow}
\newcommand{\menos}{{\setminus}}
\newcommand{\w}{{\omega}}
\newcommand{\1}{\textsc{Alice}}
\newcommand{\2}{\textsc{Bob}}
\newcommand{\seq}[1]{{\langle {#1} \rangle}}

\title[A definitive improvement...]
{A definitive improvement of  a  game-theoretic
bound and the long tightness game}

\author[L. F. Aurichi]{Leandro F. Aurichi}

\address{Instituto de Ci\^encias Matem\'aticas e de Computa\c
c\~ao,
Universidade de S\~ao Paulo, Caixa Postal 668,
S\~ao Carlos, SP, 13560-970, Brazil}
\email{aurichi@icmc.usp.br}

\author[A. Bella]{Angelo Bella}
\address{ Dipartimento di Matematica e Informatica, University of
Catania,
Citt\`a
  Universitaria, Viale A. Doria 6, 95125 Catania, Italy}
\email{bella@dmi.unict.it}

\begin{abstract}The main goal of the paper is  the full proof of
    a cardinal inequality for a space with points  $G_\delta $,
obtained with the help of a long version of the Menger game. This
result, which  improves a similar one  of Scheepers and Tall, was
already established  by the authors  under the Continuum
Hypothesis. The paper is completed by few remarks on a long
version of the tightness game.
\end{abstract}

\subjclass[2010]{54D20, 54A25, 54A35.}
\keywords{cardinality bounds, Rothberger game, Menger game,
tightness game.}
\maketitle
\smallskip
\section{Introduction}
As usual, for notation and undefined notions we refer to
\cite{engelking}.  In this paper we consider the long version of
two well-known topological games. In particular,  we study  the
influence of the existence of a winning strategy for the second
player in both games to certain cardinality properties  of the
space.

The main result (Theorem \ref{main}) shows that a cardinality
bound,
obtained by Scheepers and Tall with the
help of  the Rothberger game,  continues to hold   with the much
weaker help of the Menger game. Our generalization  works in the
class of  regular spaces and we will remark that  some
separation axiom is definitely needed for it (see Example
\ref{ex}).

The second part of the paper deals with a long version of the
tightness game.  Although this game is very different from the
Menger game,  the main result here, Theorem \ref{main2}, looks
quite similar to Theorem \ref{main}.
\smallskip
\section{  Long Menger game and cardinality}
After Arhangel'ski\u\i's cardinal inequality: $|X|\le
2^{\omega}$,
for any first countable Lindel\"of $T_2$ space $X$,  a lot of
attention has been paid to the possibility of extending this
theorem
to the whole class of spaces with $G_\delta$ points (see e. g.
\cite{hodel}).  The problem
turned out to be very non-trivial and the first negative
consistent answer was given by Shelah.  Later on, a
simpler construction of a Lindel\"of $T_3$ space with points
$G_\delta$ whose cardinality is bigger than  the continuum was
 proposed by Gorelic \cite{gorelic}.
  Somewhat related to
the Lindel\"of property are the Rothberger and Menger games (see
e.
g. \cite{JMSS}). Indeed,
by working in this direction,
in 2010  Scheepers and Tall
\cite{ScheepersTall}  proved a cardinality bound   for a
topological space with points $G_\delta$  by means of a long
version of the Rothberger
game.  The natural question to extend this result to
the much weaker Menger game was studied in \cite{AB}. There, a
 partial answer
was obtained under the Continuum Hypothesis. The main
purpose of this note is to  provide   the  full  solution to the
question in ZFC.  The proof we present here  uses elementary
submodels and
looks much simpler and direct.
\smallskip

We follow the standard notation for games: we will denote
by $\gone^\kappa(\mA, \mB)$ the game played by players $\1$ and
$\2$ such that,  at each inning $\xi < \kappa$, $\1$
chooses $A_\xi \in \mA$. Then $\2$ chooses $a_\xi \in
A_\xi$. $\2$ wins if $\{a_\xi: \xi < \kappa\} \in \mB$.

We
will denote by $\O$ the family of all open covers for a given
space. Thus, $\gone^\kappa(\O, \O)$ means that at each inning
$\1$ chooses an open cover and $\2$  chooses one
of
its members. $\2$  wins if the collection of the
chosen sets   covers the space.

 According to this notation, $\gone^\omega(\O,\O) =
\gone(\O, \O)$ is the classical  Rothberger game.

 As usual, $\mathfrak c = 2^{\w}$.

The starting point  of our investigation is in the following:
\begin{theorem}[Scheepers-Tall, \cite{ScheepersTall}]
\label{start}
 If $X$ is a space with points $G_\delta $   and $\2$ has
a winning strategy in the game $\gone^{\omega_1}(\O, \O)$, then
$|X|\le 2^{\w}$.
\end{theorem}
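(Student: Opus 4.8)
The plan is to run a closing-off argument with a single elementary submodel and to convert \2's winning strategy into enough separation to force every point of $X$ into the model. Fix a winning strategy $\sigma$ for \2 in $\gone^{\w_1}(\O,\O)$ and, using that $X$ has points $G_\delta$, fix an assignment $x\mapsto\seq{U_n(x):n\in\w}$ of decreasing open sets with $\bigcap_n U_n(x)=\{x\}$. Choose $\theta$ large and $M\prec H(\theta)$ with $\sigma$, $X$, the topology and this assignment in $M$, with ${}^{\w}M\subseteq M$, $\w_1\subseteq M$ and $|M|=\mathfrak c$ (possible since $\mathfrak c^{\w}=\mathfrak c$). Since $|M|=\mathfrak c$, it suffices to prove $X\subseteq M$, and for that it suffices to prove $p\in M$ for an arbitrary $p\in X$. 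So fix $p\in X$ and assume, for a contradiction, that $p\notin M$.

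The engine of the proof is to play $\gone^{\w_1}(\O,\O)$ with \1 restricted to covers that are elements of $M$. Because $\sigma\in M$ and ${}^{\w}M\subseteq M$, every proper initial segment of such a run (a countable sequence of members of $M$) again belongs to $M$, so each of \2's responses dictated by $\sigma$ is again an element of $M$; in particular every open set \2 ever plays in such a run is an $M$-open set. Since $\sigma$ is winning, the responses cover $X$ and hence cover $p$: along any such run some inning produces an open set $U\in M$ with $p\in U$. A convenient family of legal moves for \1 is, for each fixed $m$, the cover $\{U_m(x):x\in X\}\in M$; by elementarity its members lying in $M$ are exactly the $U_m(x)$ with $x\in X\cap M$. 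Feeding these covers cofinally often across the $\w_1$ innings, the winning responses produce, for each level $m$, an $M$-open neighborhood of $p$ coming from that level.

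It remains to squeeze these neighborhoods down to $\{p\}$. The first half is separation against model points: for $q\in X\cap M$ with $q\ne p$ one has an $M$-open neighborhood of $p$ omitting $q$ (for instance $X\menos\overline{U_n(q)}$ for suitable $n$, using $U_n(q)\in M$), so no point of $X\cap M\menos\{p\}$ lies in every $M$-open neighborhood of $p$. If one can produce a \emph{countable} decreasing family $\seq{W_k:k\in\w}$ of $M$-open neighborhoods of $p$ with $\bigcap_k W_k=\{p\}$, then ${}^{\w}M\subseteq M$ gives $\seq{W_k:k\in\w}\in M$, whence $\{p\}=\bigcap_k W_k\in M$ and therefore $p\in M$, the desired contradiction. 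I expect the genuine obstacle to be exactly this last reduction: turning \2's \emph{covering} victory over $\w_1$ innings into a shrinking neighborhood base at $p$ inside $M$, i.e. ruling out that some point other than $p$ (necessarily outside $M$) survives in every $M$-open neighborhood of $p$. This is the step that should force both the length $\w_1$ of the game and the points-$G_\delta$ hypothesis into play, and is where a separation axiom on $X$ would be used to perform the shrinking.
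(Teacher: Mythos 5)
This statement is quoted from Scheepers--Tall and the paper gives no proof of it; the closest thing to compare with is the paper's proof of Theorem \ref{main}, which runs the same kind of elementary-submodel closing-off argument for the harder game $\gfin^{\w_1}(\O,\O)$. Measured against that, your proposal sets up the right machinery (a $\sigma$- and $X$-containing $M\prec H(\theta)$ of size $\mathfrak c$ with $[M]^\w\subseteq M$, and the observation that runs whose moves for \textsc{Alice} lie in $M$ have all of \textsc{Bob}'s responses in $M$), but it stops exactly at the step that carries all the content, and you say so yourself. That is a genuine gap, not a detail: nothing in what you wrote produces a countable family $\seq{W_k:k\in\w}$ of $M$-open neighbourhoods of $p$ with $\bigcap_k W_k=\{p\}$. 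The natural witnesses $U_n(p)$ lie in $M$ only if $p\in M$, which is what you are trying to prove, so the reduction ``$\{p\}=\bigcap_k W_k\in M$, hence $p\in M$'' is close to circular. Moreover, the target $X\subseteq M$ is stronger than needed and is not how these arguments close.

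The missing idea is that the winning strategy should be used \emph{against} the bad point rather than \emph{for} it. For each $s\in\O^{<\w_1}\cap M$ one isolates a small set $K_s$ (in the paper's Menger version, $K_s=\bigcap_{\mC\in\O}\overline{\bigcup\sigma(s^\smallfrown\mC)}$, which is compact and hence of size $\le 2^\w$; in the Rothberger version the analogous set is even smaller), proves via the Lindel\"of degree and the $G_\delta$ hypothesis that $K_s$ is already the intersection of at most $\mathfrak c$ many sets $\overline{\sigma(s^\smallfrown\mC)}$ with $\mC$ drawn from a subfamily $\O'\subseteq M$ (this is where $\mathfrak c+1\subseteq M$ enters, so that $\O'\in M$ and $|\O'|\le\mathfrak c$ force $\O'\subseteq M$), and then argues: if some $x$ lay outside every $K_s$ with $s\in M$, one could recursively choose covers $\mC_\xi\in M\cap\O$ with $x\notin\sigma(s^\smallfrown\mC_\xi)$ for all $\xi<\w_1$, producing a run in which $\sigma$ loses. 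Thus $X=\bigcup\{K_s:s\in\O^{<\w_1}\cap M\}$, a union of at most $\mathfrak c$ sets of size at most $\mathfrak c$. Your proposal contains the diagonalization idea in embryo (playing covers from $M$) but points it in the wrong direction, and your closing remark that a separation axiom ``would be used'' is a further warning sign: the theorem as stated holds for all spaces with points $G_\delta$ (hence $T_1$), and the paper stresses that no stronger separation is needed for Theorem \ref{start}, only for its Menger-game generalization.
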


To appreciate the strength of  the above result and consequently
of Theorem \ref{main} below, notice that
the example of Gorelic \cite{gorelic} provides a space
$X$ with points $G_\delta $ in which  $\1$  does not have
a
winning strategy in $\gone^{\w_1}(\O, \O)$ and
$|X|>2^{\omega}$ (see \cite{ScheepersTall} for a justification
of this fact).

A very natural question arises on  whether  Scheepers-Tall's
inequality can be improved by replacing ``$\gone$''  with
``$\gfin$'', \emph{i.e.}, the game where $\2$  chooses
finitely
many sets per inning, instead of only one. In other words, we
wonder  whether the long  Menger game can suffice in the above
cardinal inequality.

 We  already obtained a positive partial answer under the
continuum
hypothesis in \cite{AB}. Our goal here is to present a proof of
this statement in ZFC.

  In \cite{AB} it is used the
duality
between $\gfin^{\w_1}(\O, \O)$ and the compact-open game of
length $\w_1$. This duality is true under CH but we do not know
if it is true in general. The proof presented here does not use
any duality.

From now on, let $\sigma$ be a fixed winning strategy for $\2$ in
the game $\gfin^\kappa(\O, \O)$ played on the space $X$. Recall
that  a
strategy for $\2$  in $\gfin^\kappa (\O,\O)$ is a function
$\sigma :\O^{<\kappa }=\bigcup\{{}^{\alpha +1}\O :\alpha <\kappa
\}\rightarrow
[\bigcup\O]^{<\omega}$ and for  any $s\in {}^{\alpha +1}\O$
we
have $\sigma (s)\subset  s(\alpha )$.

We will say that $K \subset X$ is {\bf good} if there is an $s
\in \O^{<\kappa}$ such that $K = \bigcap_{\mC \in
\O}\overline{\bigcup \sigma(s^\smallfrown \mC)}$.

\begin{lemma} \label{good} Every good subset of a regular space
is compact. \end{lemma}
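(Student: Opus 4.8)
The plan is to show directly that a good set $K = \bigcap_{\mC \in \O}\overline{\bigcup\sigma(s^\smallfrown \mC)}$ is compact by verifying that every cover of $K$ by open subsets of $X$ admits a finite subcover. First I note that $K$ is closed, being an intersection of closed sets, so $X \menos K$ is open; this will matter. The guiding idea is that $K$ is pinned down by the strategy: for \emph{every} open cover $\mC$ of $X$ we already have $K \subseteq \overline{\bigcup\sigma(s^\smallfrown \mC)}$, i.e.\ $K$ sits inside the closure of a \emph{finite} union of members of $\mC$. So if, starting from a given cover of $K$, I can manufacture an open cover $\mC$ of the whole space whose members are ``small'', then $\sigma$ is forced to return finitely many small sets whose closures already cover $K$.

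Concretely, let $\mathcal{U}$ be a cover of $K$ by open subsets of $X$. Using regularity, for each $x \in K$ I choose $U_x \in \mathcal{U}$ with $x \in U_x$ together with an open $V_x$ satisfying $x \in V_x \subseteq \overline{V_x} \subseteq U_x$. The delicate point is covering the rest of the space: the naive choice $X \menos K$ is useless, since its closure can meet $K$ and so contributes nothing exploitable. Regularity rescues the argument a second time: because $X \menos K$ is open, for each $y \in X \menos K$ I can pick an open $W_y$ with $y \in W_y \subseteq \overline{W_y} \subseteq X \menos K$, so that $\overline{W_y} \cap K = \emptyset$. Then $\mC := \{V_x : x \in K\} \cup \{W_y : y \in X \menos K\}$ is a genuine open cover of $X$, hence $\mC \in \O$.

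Feeding $\mC$ to the strategy, $\sigma(s^\smallfrown \mC)$ is a finite subfamily of $\mC$, and by the very definition of $K$ we get $K \subseteq \overline{\bigcup\sigma(s^\smallfrown \mC)} = \bigcup\{\overline{O} : O \in \sigma(s^\smallfrown \mC)\}$, the last equality holding because a finite union of closures is closed. Now I discard the chosen sets of the form $W_y$: their closures miss $K$, so they cannot contain any point of $K$, and the remaining chosen sets $V_{x_1}, \dots, V_{x_k}$ must already satisfy $K \subseteq \overline{V_{x_1}} \cup \cdots \cup \overline{V_{x_k}} \subseteq U_{x_1} \cup \cdots \cup U_{x_k}$. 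Thus $\{U_{x_1}, \dots, U_{x_k}\}$ is the sought finite subfamily of $\mathcal{U}$, and $K$ is compact. I expect the only genuine obstacle to be precisely the one flagged above — separating, by closures, the ``outside'' part of the cover from $K$ — which is exactly where the regularity hypothesis gets consumed, in keeping with the remark that some separation axiom is really needed here.
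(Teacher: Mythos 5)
Your proof is correct and follows essentially the same route as the paper: both arguments build, via regularity, an open cover of all of $X$ refining the given cover of $K$ on $K$ and consisting of sets whose closures miss $K$ off of $K$, then feed it to $\sigma$ and use the finiteness of the response. The paper merely packages the two cases into a single neighbourhood assignment $\{V_x : x \in X\}$ rather than splitting the notation into $V_x$ and $W_y$.
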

\begin{proof} Let $K=\bigcap_{\mC\in \O}\overline {\bigcup\sigma
(s^\smallfrown \mC)}$ and take a collection $\mU$ of open sets
such that $K\subset \bigcup \mU$.  Fix a neighbourhood assignment
$\mV=\{V_x:x\in X\}$ in such a way that $\overline {V_x}\subset
U_x\in \mU$ if $x\in K$ and $\overline {V_x}\cap K=\emptyset $ if
$x\in X\setminus K$. If $\sigma (s^\smallfrown \mV)=\{V_x:x\in
F\in [X]^{<\omega}\}$, then we clearly have $K\subset \bigcup
\{\overline {V_x}:x\in F\cap K\}\subset \bigcup\{U_x:x\in F\cap
K\}$. \end{proof}

\begin{lemma} \label{intersec}
  Let $X$ be a space. If $K$ is good, \emph{i.e.} there
is an $s \in \O^{<\kappa}$ such that $K = \bigcap_{\mC \in
\O}\overline{\bigcup \sigma(s^\smallfrown \mC)}$, and $K =
\bigcap_{\xi < \lambda} V_\xi$ where each $V_\xi$ is open, then
there is an $\O' \subset \O$ such that $K = \bigcap_{\mC \in
\O'}\overline{\bigcup \sigma(s^\smallfrown \mC)}$ and $|\O'| <
\lambda + \kappa $.
\end{lemma}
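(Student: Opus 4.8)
The plan is to peel off the trivial inclusion first: since $K=\bigcap_{\mC\in\O}\overline{\bigcup\sigma(s^\smallfrown\mC)}$, every candidate subfamily $\O'\subset\O$ automatically gives $K\subseteq\bigcap_{\mC\in\O'}\overline{\bigcup\sigma(s^\smallfrown\mC)}$. So the whole problem is to secure the reverse inclusion with a small $\O'$, and for this it is convenient to write, for each cover $\mC$, $W_\mC:=\overline{\bigcup\sigma(s^\smallfrown\mC)}$, a closed set containing $K$. Exclusion is cheap pointwise: if $x\notin K$ then, because $K$ is exactly the intersection of all the $W_\mC$, there is at least one cover $\mC$ with $x\notin W_\mC$. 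The content of the lemma is to perform all these exclusions \emph{simultaneously} with fewer than $\lambda+\kappa$ covers, exploiting both the presentation $K=\bigcap_{\xi<\lambda}V_\xi$ and the fact that $\sigma$ is \emph{winning}.

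First I would set up a single global transfinite recursion of length at most $\lambda+\kappa$. At stage $\eta$ I keep the current closed set $K_\eta=\bigcap_{\zeta<\eta}W_{\mC_\zeta}$ (with $K_0=X$). If $K_\eta=K$ I stop and output $\O'=\{\mC_\zeta:\zeta<\eta\}$. Otherwise I pick a witness $x_\eta\in K_\eta\menos K$; since $K=\bigcap_{\xi<\lambda}V_\xi$, there is an index $\xi(\eta)<\lambda$ with $x_\eta\notin V_{\xi(\eta)}$, and since $K$ is the intersection of all $W_\mC$, there is a cover $\mC_\eta$ with $x_\eta\notin W_{\mC_\eta}$; I adjoin $\mC_\eta$ and continue. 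The recursion is strictly decreasing in the $K_\eta$, so the only thing left is to prove that it \emph{halts before} reaching length $\lambda+\kappa$; that bound is exactly what forbids the wasteful ``one cover per point'' solution and is the whole game.

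The hard part will be the termination estimate, and this is where the winning strategy must enter. The natural attempt is to feed a long run back into the game: after the fixed position $s$ let $\1$ play $\mC_0,\mC_1,\dots$ at the successive innings, let $\2$ answer by $\sigma$, and try to contradict that $\sigma$ covers $X$. The genuine obstacle is that the sets $W_{\mC_\eta}$ are built from the \emph{single} responses $\sigma(s^\smallfrown\mC_\eta)$ to positions of length $|s|+1$, whereas in an actual play of $\gfin^\kappa(\O,\O)$ $\2$'s move at a late inning is $\sigma$ applied to the long history; reconciling these two is the crux. I would handle it by choosing the $\mC_\eta$ as neighbourhood-assignment covers in the style of Lemma~\ref{good}, so that regularity and the compactness of $K$ let one read off genuine coverage information from each finite response, and then split into cases on $\lambda$ and $\kappa$: if $\lambda<\kappa$, apply the pigeonhole principle to the map $\eta\mapsto\xi(\eta)$ to extract a run of full length $\kappa$ whose witnesses all avoid one fixed $V_\xi$, assemble it into a play after $s$ in which $\2$'s finitely-many choices per inning forever miss the corresponding part of $X\menos V_\xi$, and contradict that $\sigma$ is winning; if $\lambda\ge\kappa$, bound the length directly by $\lambda$, using that each stage records a point outside a distinct requirement $V_{\xi(\eta)}$. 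Either way the recursion stops at some length $<\lambda+\kappa$, and the output $\O'$ is the desired small subfamily.
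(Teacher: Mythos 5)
Your reduction of the problem to covering $X\menos K$ by the open sets $X\menos W_\mC$ is the right first move, and you correctly identified the crux --- that the sets $W_{\mC_\eta}$ come from one-step responses $\sigma(s^\smallfrown \mC_\eta)$ to positions of length $|s|+1$, which cannot simply be strung together into a play of $\gfin^\kappa(\O,\O)$, since in an actual play $\2$'s move at inning $\eta$ is $\sigma$ applied to the entire history $s^\smallfrown\mC_0{}^\smallfrown\cdots{}^\smallfrown\mC_\eta$. But having named that obstacle you do not overcome it: the proposed reconciliation via neighbourhood-assignment covers and ``reading off coverage information'' is not an argument, and both branches of your case split fail. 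In the case $\lambda\ge\kappa$ you assert that each stage records a point outside a \emph{distinct} $V_{\xi(\eta)}$, but nothing forces the map $\eta\mapsto\xi(\eta)$ to be injective --- arbitrarily many witnesses $x_\eta$ can avoid the same $V_\xi$, so the recursion has no a priori bound. In the case $\lambda<\kappa$, even after pigeonholing, the ``assembled play'' is not a play governed by $\sigma$, so no contradiction with $\sigma$ being winning is obtained. (You also invoke regularity and compactness of $K$, which the lemma does not assume.)

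The actual proof sidesteps all of this because the \emph{only} consequence of the winning strategy it needs is that $L(X)\le\kappa$: let $\1$ play the same cover at every inning of $\gfin^\kappa(\O,\O)$ and collect $\2$'s responses to get a subcover of size at most $\kappa$. Then $X\menos K=\bigcup_{\xi<\lambda}(X\menos V_\xi)$ is a union of $\lambda$ many closed subspaces, each of Lindel\"of degree at most $\kappa$, so $L(X\menos K)\le\lambda+\kappa$. The family $\{X\menos\overline{\bigcup\sigma(s^\smallfrown\mC)}:\mC\in\O\}$ is an open cover of $X\menos K$ (precisely because $K$ is the intersection of all the $W_\mC$), so it admits a subcover indexed by some $\O'\subset\O$ with $|\O'|\le\lambda+\kappa$, and this $\O'$ already satisfies $K=\bigcap_{\mC\in\O'}\overline{\bigcup\sigma(s^\smallfrown\mC)}$. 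No recursion, no case analysis, and no need to re-enter the game with the covers you selected. Your write-up is missing exactly this step --- converting the strategy into a Lindel\"of degree bound and applying it to $X\menos K$ --- and without it the termination of your recursion is unproved.
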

\begin{proof}  As we are assuming that $\2$ has a winning
strategy in  $\gfin^{ \kappa }(\O,\O)$, the Lindel\"of degree of
$X$ is at most $ \kappa $. Consequently,   we have $L(X\setminus
K)\le \lambda + \kappa $.  Since the family $\{X\setminus
\overline {\bigcup\sigma(s^\smallfrown \mC)}:\mC\in \O\}$ is an
open cover of $X\setminus  K$,  there exists $\O'\subset \O$ such
that $|\O'|\le \lambda + \kappa $ and $\{X\setminus \overline
{\bigcup\sigma(s^\smallfrown \mC)}:\mC\in \O'\}$ covers
$X\setminus K$. Therefore, $K=\bigcap_{\mC\in
\O'}\overline{\bigcup\sigma(s^\smallfrown \mC)}$ and we are 
done.  \end{proof}

\begin{lemma}
Let $X$ be a space with points $G_\delta$. Then for every compact
subset $K$ there is a sequence $\seq{V_\xi: \xi < 2^\w}$ of open
sets such that $K = \bigcap_{\xi < 2^\w} V_\xi$.
\end{lemma}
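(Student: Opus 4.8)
The plan is to reduce the statement to a neighborhood-covering argument and to control the number of neighborhoods by a cardinality bound on $K$ itself. First I would observe that, since the ambient space is regular and $K$ is compact, $K$ is closed, and for each $x \in K$ I can fix a decreasing sequence of open sets $\{W^x_n : n < \w\}$ with $x \in W^x_{n+1} \subseteq \overline{W^x_{n+1}} \subseteq W^x_n$ and $\bigcap_n W^x_n = \{x\}$. Regularity lets me pass to the closures so that $\bigcap_n \overline{W^x_n} = \{x\}$ as well. Call this family $\mathcal W = \{W^x_n : x \in K,\ n < \w\}$.

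The crucial preliminary step is the bound $|K| \le 2^\w$. Since points are $G_\delta$ in $X$, each singleton $\{x\}$ with $x \in K$ is a $G_\delta$ in the subspace $K$, so $K$ is a compact Hausdorff space of countable pseudocharacter; as pseudocharacter equals character in compact Hausdorff spaces, $K$ is first countable, and Arhangel'ski\u\i's theorem yields $|K| \le 2^\w$. Consequently $|\mathcal W| \le |K| \cdot \w \le 2^\w$, and the collection of all finite unions of members of $\mathcal W$ has cardinality at most $(2^\w)^{<\w} = 2^\w$.

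Next, for each $y \in X \menos K$ I would produce an open set $V_y \supseteq K$ with $y \notin V_y$ that is a finite union from $\mathcal W$. For a fixed such $y$ and each $x \in K$, since $y \neq x$ and $\bigcap_n \overline{W^x_n} = \{x\}$, I choose $n_x$ with $y \notin \overline{W^x_{n_x}}$. The sets $\{W^x_{n_x} : x \in K\}$ cover $K$, so compactness gives $x_1, \dots, x_k$ with $K \subseteq V_y := \bigcup_{i \le k} W^{x_i}_{n_{x_i}}$; since $y$ avoids each closure $\overline{W^{x_i}_{n_{x_i}}}$ and finite unions commute with closure, $y \notin \overline{V_y}$, in particular $y \notin V_y$.

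Finally I would collect the distinct sets among $\{V_y : y \in X \menos K\}$: by the count above there are at most $2^\w$ of them, and I can enumerate them, padding with copies if necessary, as $\seq{V_\xi : \xi < 2^\w}$. Each $V_\xi$ contains $K$, so $K \subseteq \bigcap_\xi V_\xi$; conversely every $y \notin K$ is excluded by its own $V_y$, whence $\bigcap_\xi V_\xi = K$, as required. The main obstacle is precisely the bound $|K| \le 2^\w$: everything else is a routine compactness-plus-regularity separation, but without the first countability of $K$ and Arhangel'ski\u\i's inequality the family of neighborhoods could not be held below the continuum.
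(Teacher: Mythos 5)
Your construction is essentially the paper's own: bound $|K|$ by $2^\w$, fix for each $x\in K$ a countable family of open sets intersecting to $\{x\}$, and take all finite unions of these that cover $K$, using compactness to separate any $y\notin K$; this is exactly the argument in the text, and your counting and padding at the end are fine. The one real divergence is how you get $|K|\le 2^\w$: the paper quotes Gryzlov's theorem, which needs only that $K$ be compact $T_1$ with points $G_\delta$ (and $T_1$ is automatic from points $G_\delta$), whereas your route through ``pseudocharacter equals character in compact Hausdorff spaces'' plus Arhangel'ski\u\i\ requires Hausdorffness, so it proves the lemma only under a separation axiom not present in its statement --- harmless for the application in Theorem \ref{main}, where $X$ is regular, but worth noting. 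Also, your use of regularity to shrink to closures is unnecessary: to exclude $y$ it suffices that $y\notin V^{x}_{n_x}$ for each of the finitely many sets whose union covers $K$, with no closures involved.
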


\begin{proof}
  First note that each compact $K \subset X$  satisfies $|K|
\leq  2^{\w}$. This is a consequence of a theorem of
Gryzlov \cite{Gryzlov}.
 For every $x \in K$, let $\{V_n^x: n \in \w\}$ be a family of
open subsets of $X$ satisfying $\bigcap_{n<\w}V_n^x=\{x\}$.

Let $\mB = \{\bigcup_{i = 0}^k
V_{n_i}^{x_i} \supset K: x_0, ..., x_k \in K, n_0, ..., n_k \in
\w\}$. Note that $\bigcap \mB=K$ and $|\mB| \leq  2^{\w}$.
\end{proof}

Now, we have everything to prove  the announced result.

\begin{theorem} \label{main}
  Let $X$ be a  regular space with points $G_\delta$ such that
$\2$ has a winning strategy for the $\gfin^{\w_1}(\O, \O)$ game.
Then $|X| \leq 2^{\w}$.
\end{theorem}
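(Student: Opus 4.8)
The plan is to use an elementary submodel argument, exactly the three preceding lemmas supplying the topological input. Fix a sufficiently large regular cardinal $\theta$ and choose $M\prec H(\theta)$ with $X,\sigma\in M$, $|M|=2^\w$, $2^\w\subseteq M$, and $M$ closed under countable sequences (i.e.\ $M^\w\subseteq M$); this is possible since $(2^\w)^\w=2^\w$. It then suffices to show $X\subseteq M$, for this gives $|X|\le|M|=2^\w$. So I would suppose toward a contradiction that there is a point $p\in X\menos M$ and aim to defeat $\sigma$.

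The decisive preliminary step is to analyse good sets relative to $M$. For any $s\in\O^{<\w_1}\cap M$ the good set $K=\bigcap_{\mC\in\O}\overline{\bigcup\sigma(s^\smallfrown\mC)}$ is definable from $s,\sigma,X\in M$, hence $K\in M$; by Lemma \ref{good} it is compact, so by Gryzlov's theorem $|K|\le 2^\w$, and since $2^\w\subseteq M$ this forces $K\subseteq M$, whence $p\notin K$. Moreover, writing $K$ as an intersection of $2^\w$ open sets (the third lemma) and feeding this into Lemma \ref{intersec} with $\lambda=2^\w$, $\kappa=\w_1$, there is $\O'\subseteq\O$ with $|\O'|\le 2^\w$ and $K=\bigcap_{\mC\in\O'}\overline{\bigcup\sigma(s^\smallfrown\mC)}$. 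Reflecting this existential statement into $M$ and again using $2^\w\subseteq M$ yields such an $\O'$ entirely inside $M$, so in fact $K=\bigcap_{\mC\in\O\cap M}\overline{\bigcup\sigma(s^\smallfrown\mC)}$. The payoff is the dodging principle: whenever $p\notin K$ there is a cover $\mC\in\O\cap M$ with $p\notin\overline{\bigcup\sigma(s^\smallfrown\mC)}$.

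With this in hand I would let $\2$ follow $\sigma$ and have $\1$ build, by recursion on $\xi<\w_1$, a play $\seq{\mC_\xi:\xi<\w_1}$ with every $\mC_\xi\in\O\cap M$ that permanently avoids $p$. At stage $\xi$ the history $s_\xi=\seq{\mC_\eta:\eta<\xi}$ is a countable sequence of elements of $M$, hence $s_\xi\in M$; since $p\notin M$ we have $p\notin K_{s_\xi}$ by the previous paragraph, so $\1$ may pick $\mC_\xi\in\O\cap M$ with $p\notin\overline{\bigcup\sigma(s_\xi^\smallfrown\mC_\xi)}$. Then $p\notin\bigcup\sigma(s_\xi^\smallfrown\mC_\xi)$ for each $\xi$, hence $p\notin\bigcup_{\xi<\w_1}\sigma(s_\xi^\smallfrown\mC_\xi)$: the sets chosen by $\2$ fail to cover $X$, contradicting that $\sigma$ is winning. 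Thus no such $p$ exists and $X\subseteq M$.

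I expect the crux to be the middle paragraph, where the good set $K_{s_\xi}$ must be shown to be \emph{simultaneously} small enough to sit inside $M$ and ``captured'' by covers lying in $M$. This is precisely where all three earlier lemmas combine: compactness (Lemma \ref{good}) together with Gryzlov's bound gives $K\subseteq M$, while the representation of $K$ through few closures (Lemma \ref{intersec}, fed by the $G_\delta$ lemma) lets elementarity drag the dodging cover into $M$. The remaining bookkeeping --- the closure of $M$ under $\w$-sequences guaranteeing $s_\xi\in M$ at every countable stage, which is what lets the recursion run through all of $\w_1$ --- is routine.
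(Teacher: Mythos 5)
Your proposal is correct and follows essentially the same route as the paper: the same elementary submodel (with $\mathfrak c\subseteq M$ and $[M]^\w\subseteq M$), the same use of the three lemmas to conclude that each good set coded in $M$ has size at most $2^\w$ and equals $\bigcap_{\mC\in\O\cap M}\overline{\bigcup\sigma(s^\smallfrown\mC)}$, and the same $\w_1$-length recursion defeating $\sigma$. The only cosmetic difference is that you prove $X\subseteq M$ directly (via $K\subseteq M$ for good $K\in M$), whereas the paper shows $X=\bigcup\mK$ for the family $\mK$ of good sets lying in $M$; these amount to the same argument.
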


\begin{proof}
 Let $\mu$ be a large enough regular cardinal and $M$ be an
elementary submodel of $H(\mu)$ such that $|M| = 2^\w$, $X,
\sigma \in M , \mathfrak c+1 \subset  M$ and $[M]^\w \subset M$.
Let $\mK = \{K
\subset X: K \in M$ and $K$ is good$\}$. It is enough to show
that $X = \bigcup \mK$, since each $K \in \mK$ is such that $|K|
\leq 2^\w$.

  Assuming the contrary, there is an $x \in X \setminus \bigcup
\mK$.
Let $K_0 = \bigcap_{\mC \in \O}\overline{\bigcup
\sigma(\mC)}$. Note that $K_0$ is definable in $M$ and so $K_0
\in \mK$. Working
inside of $M$ and using the three  previous lemmas, we obtain
that
there is an $\O' \in M$, such that $|\O'| \leq 2^\w$ and $K_0 =
\bigcap_{\mC \in \O'}\overline{\bigcup \sigma(\mC)}$. Since
$\mathfrak c+1\subset M$, we actually have  $\O' \subset M$ and
so
there is a $\mC_0 \in M \cap \O$ such that $x \notin \bigcup
\sigma(\mC_0)$. We now proceed by induction. Assume to have
already defined    open covers $\{\mC_\alpha :\alpha
<\xi\}\subset M$ and  define $s:\xi\to \O$ by letting $s(\alpha
)=\mC_\alpha $ for $\alpha <\xi$. Since $M$ is $\omega$-closed,
we actually have  $s \in M$. Therefore, $K_\xi  = \bigcap_{\mC
\in \O}\overline{\bigcup \sigma(s^\smallfrown \mC)}$ is
definable in $M$ and so it is again   an
element
of $\mK$. Then,  as before we can obtain a $\mC_\xi \in M \cap
\O$ such that $x \notin \bigcup \sigma(s^\smallfrown \mC_\xi)$.

But note that doing like this, we find a play of the game where
$\2$ loses although using a winning strategy.
\end{proof}

Note that we actually proved that under the hypothesis of Theorem
\ref{main}, $X = \bigcup_{\xi < \mathfrak c} K_\xi$, where each
$K_\xi$ is compact.  However, this is not enough to guarantee
that $\1$
wins in the long compact-open game without CH (see \cite{AB}).

Furthermore, note that with a simple modification in the previous
argument,
using a countable submodel we obtain the Telgarsky's result
(reproved by Scheepers in \cite{sch95}):

\begin{corollary}
  If $X$ is a  regular space where every compact set is a
$G_\delta$
and $\2$ has a winning strategy for the usual Menger  game
$\gfin(\O, \O)$,
then $X$ is $\sigma$-compact.
\end{corollary}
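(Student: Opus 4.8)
The plan is to mirror the proof of Theorem~\ref{main} but replace the elementary submodel of size $\mathfrak{c}$ with a countable one, so that the induction runs for countably many stages and produces an honest play of the ordinary Menger game $\gfin(\O,\O)$. Let me check the statement makes sense: here every compact set is $G_\delta$, so I cannot invoke the third lemma (which builds a $2^\omega$-indexed family), but I can replace it with the much cheaper assertion that each compact $K$ is the intersection of countably many open sets. Concretely, fix a winning strategy $\sigma$ for $\2$ in $\gfin(\O,\O)$, i.e.\ in $\gfin^{\w}(\O,\O)$, and let $M$ be a countable elementary submodel of some $H(\mu)$ with $X,\sigma\in M$ and $[M]^{<\w}\subset M$.

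First I would define $\mathcal{K}=\{K\subset X : K\in M \text{ and } K \text{ is good}\}$ and claim that $X=\bigcup\mathcal{K}$; since every good set is compact by Lemma~\ref{good}, this exhibits $X$ as a union of countably many compacta, hence $\sigma$-compact. To prove the claim, suppose toward a contradiction that some $x\in X\menos\bigcup\mathcal{K}$. I would then rerun the inductive construction from Theorem~\ref{main}: start with $K_0=\bigcap_{\mC\in\O}\overline{\bigcup\sigma(\mC)}$, which is definable in $M$ and therefore lies in $\mathcal{K}$. Because $K_0$ is compact and compact sets are $G_\delta$, I can write $K_0=\bigcap_{n<\w}V_n$ with the $V_n$ open; applying Lemma~\ref{intersec} with $\lambda=\w$ and $\kappa=\w$ gives an $\O'\subset\O$ with $|\O'|\le\w$ and $K_0=\bigcap_{\mC\in\O'}\overline{\bigcup\sigma(\mC)}$. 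Running this reasoning inside $M$ and using $\w+1\subset M$ together with countability of $M$, I can arrange $\O'\subset M$, so there is some $\mC_0\in M\cap\O$ with $x\notin\bigcup\sigma(\mC_0)$ (otherwise $x$ would belong to the intersection $K_0\in\mathcal{K}$, a contradiction).

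The induction then proceeds exactly as before: given $\{\mC_\alpha:\alpha<n\}\subset M$, set $s(\alpha)=\mC_\alpha$; since $M$ is closed under finite sequences, $s\in M$, so $K_n=\bigcap_{\mC\in\O}\overline{\bigcup\sigma(s^\smallfrown\mC)}$ is definable in $M$ and belongs to $\mathcal{K}$. As $x\notin\bigcup\mathcal{K}$, the same Lemma~\ref{intersec} argument inside $M$ yields $\mC_n\in M\cap\O$ with $x\notin\bigcup\sigma(s^\smallfrown\mC_n)$. Carrying this out for all $n<\w$ builds a full play $\seq{\mC_0,\mC_1,\dots}$ of $\gfin(\O,\O)$ in which $\2$ uses $\sigma$ yet the point $x$ is never covered, so $\{\,\bigcup\sigma(s\restriction(n+1)):n<\w\,\}$ fails to cover $X$. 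This contradicts $\sigma$ being a winning strategy, completing the proof.

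The main obstacle, and the only place where the weaker hypothesis genuinely matters, is securing $\O'\subset M$ at each stage: with a countable $M$ we must guarantee that the size-$\le\w$ cover $\O'$ produced by Lemma~\ref{intersec} is actually contained in $M$, not merely an element of $M$. This is exactly why the hypothesis ``every compact set is $G_\delta$'' is used rather than ``points $G_\delta$'': it forces $\lambda=\w$ in Lemma~\ref{intersec}, so $|\O'|\le\w$, and $\w+1\subset M$ (equivalently $\w\subset M$, automatic for any $M$ containing $\w$ as an element and closed under finite sequences) then upgrades $\O'\in M$ to $\O'\subset M$. Everything else is the verbatim countable analogue of the proof of Theorem~\ref{main}.
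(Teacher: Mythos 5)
Your proposal is correct and is exactly the argument the paper has in mind: the paper proves this corollary only by remarking that one should rerun the proof of Theorem \ref{main} with a countable elementary submodel, and you have filled in precisely those details, correctly identifying that the hypothesis ``every compact set is $G_\delta$'' replaces the third lemma and forces $\lambda=\omega$ in Lemma \ref{intersec}, so that the countable $\O'\in M$ is automatically a subset of $M$.
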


Since Theorem \ref{start} is actually true for $T_1$ spaces, we
could suppose that the same happens to Theorem \ref{main}.
But, Theorem \ref{main} drastically fails for $T_1$ spaces.
Indeed,
even under the stronger assumption that $\2$ has a winning
strategy in the ``short''  Menger game,  the cardinality
of a space with points $G_\delta$ can be very big.

\begin{example} \label{ex} If $\kappa$ is less than the first 
measurable cardinal, then there exists a $T_1$ space $X$ with
points
$G_\delta$ such that $\2$ has a winning strategy in
$\gfin(\O,\O)$ and $|X|\ge \kappa $. \end{example}
\begin{proof}  The example we need is just the space $X$
constructed by Juh\'asz in \cite{Juhasz} [Example 7.2].
Following the notation in \cite{Juhasz},  we have
$X=\bigcup\{X_n:n<\omega\}$, where $X_0=\kappa $. In
\cite{Juhasz} it is pointed out that  for a given $n<\omega$
every open  family  covering $X_{n+1}$ has a finite subfamily
covering all but
finitely many members of $X_n$. The latter assertion clearly
 implies that every open cover of $X$ has a finite subfamily
which covers
the whole $X_n$ and this in turn  guarantees an easy winning
strategy to $\2$ in $\gfin{(\O,\O)}$. \end{proof}

The original cardinality bound of Arhangel'ski\u\i\ as well as
most
of its variations    work for $T_2$ spaces. So, it is reasonable
to ask:
\begin{question} Does Theorem \ref{main} continue to hold for
$T_2$ spaces? \end{question}

 Recall that,  given a space  $X$, the symbol 
$X_\delta$ denotes  the
 space with the same underlying set $X$ with the topology 
generated by the
$G_\delta$ subsets of $X$. In \cite{AB} it  was shown
that Theorem \ref{start} is actually a consequence of the more
general
statement  that a winning strategy for $\2$ in
$\gone^{\omega_1}(\O,\O)$  implies that the Lindel\"of degree of
$X_\delta$ is at most $2^\omega$.  This seems to suggest a
possible further strengthening of Theorem \ref{main} as follows: 
if  $X$  is a  regular space where $\2$ has a
winning strategy  in $\gfin^{\omega_1}(\O,\O)$, then  
$L(X_\delta)\le 2^\omega$.
However, this conjecture drastically fails because  there are
compact $T_2$ spaces  such that the Lindel\"of degree of the
$G_\delta$-modification is much bigger than the continuum (see e.
g. \cite{ss} or \cite{usuba}), while  for every compact space 
$\2$ may win  in $\gfin^{\omega_1}(\O,\O)$ at the first inning!  

\smallskip
\section{Few remarks on the long tightness game}
We conclude the paper by looking at a long version of the
tightness game. One reason is in the similarity of
Theorem \ref{main} and Theorem \ref{main2}.

Given a space $X$ and a point $x\in X$, $\Omega_x$ denotes the
collection of all sets $A\subseteq X$ satisfying $x\in \overline
A$.  The tightness game
$\gone(\Omega_x, \Omega_x)$  is played between
players \1
and \2 in such a way that,  at every inning $n \in \omega $, \1
chooses a member $A_n \in \Omega_x$, and then  \2 chooses $a_n
\in
A_n$. \2 is declared the winner if, and only if, $\{a_n:
n \in \omega \} \in \Omega_x$ (see \cite{ABD} for much more).

 If the previous game consists of $\omega_1$-many innings, then
we have the long tightness game $\gone^{\w_1}(\Omega_x,
\Omega_x)$.

\begin{theorem} \label{main2}
  If $X$ is a regular space that has a dense subset $E$ with $|E|
\leq 2^\w$ and $\2$ has a winning strategy  in the
game $\gone^{\w_1}(\Omega_p, \Omega_p)$ for some $p \in X$, then
$\chi(p,X) \leq 2^\w$.
\end{theorem}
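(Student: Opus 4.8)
The plan is to mimic the elementary-submodel argument used for Theorem \ref{main}. Fix a winning strategy $\tau$ for \2 in $\gone^{\w_1}(\Omega_p,\Omega_p)$, choose a large regular $\mu$, and take an elementary submodel $M \prec H(\mu)$ with $|M| = 2^\w$, $\{X,\tau,p,E\} \subset M$, $\mathfrak c + 1 \subset M$ and $[M]^\w \subset M$. Since $|E| \le 2^\w$ we get $E \subset M$, and since $\mathfrak c + 1 \subset M$ we also have $\w_1 \subset M$. The candidate local base at $p$ is $\mathcal B = \{U \in M : U \text{ open},\ p \in U\}$, which has size at most $|M| = 2^\w$. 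It therefore suffices to prove that $\mathcal B$ is a local base at $p$, and the entire weight of the hypotheses (regularity, the small dense set, and the winning strategy) has to be spent on this single point.

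Assume, towards a contradiction, that $\mathcal B$ is not a local base. Then there is an open $W \ni p$ with $U \not\subset W$ for every $U \in \mathcal B$; write $C = X \setminus W$, a closed set with $p \notin C$. Exploiting the density of $E$, I would restrict \1 to moves of the form $A = V \cap E$ (so that $A \in \Omega_p$ whenever $p \in \overline V$, and every answer of \2 lands in $E \subset M$), and try to build by recursion on $\xi < \w_1$ a play $\seq{A_\eta : \eta < \w_1}$ following $\tau$ in which every response $a_\xi = \tau(\seq{A_\eta : \eta \le \xi})$ avoids $W$. If such a play exists, then $\{a_\xi : \xi < \w_1\} \subset C$, so $p \notin \overline{\{a_\xi : \xi < \w_1\}}$ and \2 loses a play conducted with a winning strategy, which is the desired contradiction. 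At each countable stage $\xi$ the partial play $s_\xi = \seq{A_\eta : \eta < \xi}$ is a countable sequence of elements of $M$, hence $s_\xi \in M$ by $\w$-closure, so the data attached to $s_\xi$ are computed inside $M$ and the limit stages are handled routinely.

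The main obstacle is precisely the inductive step: given the reachable position $s_\xi \in M$, one must produce $A_\xi \in \Omega_p \cap M$ with $\tau(s_\xi{}^\smallfrown A_\xi) \notin W$, i.e. one must show that no reachable position is \emph{trapped}, where $s$ is trapped if every admissible answer in $\{\tau(s^\smallfrown A) : A \in \Omega_p \cap M\}$ lies in $W$. Here the failure of $\mathcal B$ has to be converted into escapability, and the conversion must be reflective rather than crude, because \1 cannot keep a single response out of a fixed neighbourhood of $p$ by the shape of $A$ alone: any $A \in \Omega_p$ meets every neighbourhood of $p$. The leverage is that $p \in \overline{R_s}$ for the definable response set $R_s = \{\tau(s^\smallfrown A) : A \in \Omega_p\} \in M$ — indeed, for every neighbourhood $V$ of $p$ the move $V \cap E$ forces an answer inside $V$ — so a trapped $s \in M$ would let $H(\mu)$, and hence by elementarity $M$, see an open set separating $R_s$ from $X \setminus W$, and regularity should then manufacture a member of $\mathcal B$ refining $W$, contradicting the choice of $W$. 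Making this reflection precise, namely pinning down exactly which $M$-definable neighbourhood a trapped position yields and verifying that it sits inside $W$, is the delicate heart of the argument; everything else (the bound $|\mathcal B| \le 2^\w$, the legality of \1's moves, and the $\w$-closure bookkeeping at limits) is routine. The resulting proof then parallels Theorem \ref{main}, with the compact ``good'' sets replaced by the closed response sets $\overline{R_s}$ and the covering requirement replaced by accumulation at $p$.
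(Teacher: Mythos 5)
Your setup (the $\omega$-closed submodel $M$ with $E\subset M$, the candidate base of $M$-neighbourhoods of $p$, and the plan to build a $\sigma$-play of length $\w_1$ all of whose responses miss a fixed neighbourhood of $p$) is exactly the paper's, but the step you yourself flag as ``the delicate heart of the argument'' is left open, and the leverage you propose for it is genuinely too weak. You only establish $p\in\overline{R_s}$ for the response set $R_s=\{\sigma(s^\smallfrown A):A\in\Omega_p\}$ and then hope to ``reflect'' the trappedness of a position through $M$; but the witness $W$ does not belong to $M$, so neither $R_s\setminus W$ nor an open set separating $R_s$ from $X\setminus W$ is definable in $M$, and mere accumulation of $R_s$ at $p$ gives \1\ no way to steer a response out of $W$. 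The fact that closes the gap is strictly stronger: $R_s$ \emph{contains an entire open neighbourhood} $V_s$ of $p$. Its proof is a short diagonal argument you are missing: if no such neighbourhood existed, the set $D$ of all points \emph{not} of the form $\sigma(s^\smallfrown A)$ would have $p$ in its closure, hence $D\in\Omega_p$ would be a legal move and $\sigma(s^\smallfrown D)\in D$ would contradict the definition of $D$. Such a $V_s$ is definable from $s$ and $\sigma$, hence lies in $M$ and is one of your candidate basic neighbourhoods; since the candidate family is assumed not to be a base, $V_s\not\subset\overline W$, the set $V_s\setminus\overline W$ is open and nonempty, it meets $E\subset M$ in a point $e_\xi\in M$, and elementarity then reverse-engineers a move $D_\xi\in\Omega_p\cap M$ with $\sigma(s^\smallfrown D_\xi)=e_\xi\notin\overline W$. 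So \1's moves are not sets of the form $V\cap E$ chosen in advance: the desired response is located first, inside $V_s\cap E$, and the move producing it is recovered afterwards.

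A secondary but real defect: you take $W$ with merely $U\not\subset W$ for every candidate $U$, but then $U\setminus W$ is the intersection of an open set with a closed set and need not be open, so the density of $E$ cannot be invoked to find the required point of $M$ in it. Regularity must be spent at the outset to replace the witness by one satisfying $U\not\subset\overline W$ for every candidate $U$, so that $V_s\setminus\overline W$ is open. With the neighbourhood lemma above and this correction, your recursion and the final contradiction (the responses all lie in the closed set $X\setminus W$, which misses $p$, so \2\ loses while following a winning strategy) go through exactly as in the paper.
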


\begin{proof}
  Let $\sigma$ be a winning strategy for $\2$.
Let $\mu$ be a large enough regular cardinal and $M$ be an
elementary submodel of $H(\mu)$ such that
$ E \subset M$, $X,  \sigma \in M$,
$[M]^\w \subset M$ and $|M| = 2^\w$. For every sequence $s \in
\Omega_p^{<\w_1}$, there is a
neighbourhood $V_s$ of $p$  such that for every $x \in V_s$,
there is
a $D \in
\Omega_p$ such that $x = \sigma(s^\smallfrown D)$. We will call
such a neighbourhood \emph{good}. To verify the existence of
$V_s$, assume the contrary and let $D$ be the set of all  $x\in
X$ such that $\sigma(s^\smallfrown A)\ne x$ for each $A\in
\Omega_p$. But then $D\in \Omega_p$ and so $\sigma(s^\smallfrown
D)\in D$, in contrast with the definition of $D$.

Now, to prove the theorem  it
is enough to show that $\mV = \{V \subset X: V \in M$ and $V$ is
good$\}$ is a local base  at $p$. Assume the contrary.  Then, by
regularity, there is an open neighborhood $W$ of $p$ such that $V
\not \subset \overline W$ for every $V \in \mV$. Let $V_0$
be an open set  such that for every $x \in V_0$ there is a $D \in
\Omega_p$ such
that $x = \sigma(D)$. $V_0$ is definable in $M$ and so  $V_0 \in
\mV$.  Besides, by density,
there is an $e_0 \in (V_0  \setminus \overline W) \cap E$. Note
that
$e_0$ is in $M$, therefore there is a $D_0$ such that
$\sigma(D_0) = e_0$. Now,   we proceed by induction, by assuming
to have already defined  points $e_\alpha \in E$ and sets
$D_\alpha \in \Omega_p$ for $\alpha <\xi$.  let $s=\{(e_\alpha ,
D_\alpha ):\alpha <\xi\}\in( \Omega_p \cap M)^{<\w_1}$.  Since
$M$ is
countably closed, $s \in M$. Therefore, there is an open
neighborhood $V_s$ of $p$ such that for every $x \in V_s$,
$\sigma(s^\smallfrown D) = x$. Again, $V_s \in M$. As before, we
can take $e_\xi \in (V_s \setminus \overline W) \cap E$ and then
choose $D_\xi$ such that $e_\xi = \sigma(s^\smallfrown D_\xi)$.
Note that $D_\xi \in M$.  But, playing like this, at the end
$\2$ would
loose the game -- a contradiction.
\end{proof}

One may wonder if the above theorem is the best possible, namely
if we could get $\chi(p,X)\le \w_1$.    This obviously  happens
by assuming $2^\w=\w_1$, but   the next example  show it is no
longer true
without the Continuum Hypothesis.

\begin{example} A regular space $X$ with a dense set
$E$  of size $2^\omega$ and a point $p$ such that $\2$ has a
winning strategy in $\gone^{\w_1}(\Omega_p, \Omega_p)$ and
$\chi(p,X)=2^\w$. \end{example}
\begin{proof}  Let $E$ be a set of cardinality $2^\w$ with the
discrete topology and let $X=E\cup\{p\}$ be the one-point
Lindel\"ofication of $E$. Observe that  $U$ is a neighbourhood of
$p$ in $X$ if and only if $p\in U$ and $|X\setminus U|\le
\omega$.  We have $\chi(p,X)=2^\w$.
 Indeed, if $\mathcal U$ is a collection of neighborhoods of $p$
satisfying $|\mU|<2^\w$, then  $|E\setminus \bigcap \mU|\le
|\mU|\omega<2^\w$ and so $|\bigcap \mU|=2^\w$, which in turn
implies that $\mU$ cannot be a local base. On the other hand,
\2 has an
easy winning strategy in $\gone^{\w_1}(\Omega_p,\Omega_p)$: fix
$\xi<\w_1$ and suppose that  $e_\alpha
$ is the point \2 has chosen at the inning $\alpha <\xi$. If   at
the $\xi$-inning \1 plays $A_\xi\in \Omega_p$, then \2  simply
takes a point $e_\xi\in A_\xi\setminus \{e_\alpha :\alpha
<\xi\}$.  This can be done because $A_\xi$ is uncountable. Now,
at the end of the game \2 has chosen an uncountable set of
points  and so  he wins. \end{proof}

Let us denote by $\D$ the collection of all dense subsets of a
given topological space. Note that if $\2$ has a winning strategy
for the game $\gone^{\w_1}(\D, \D)$, then the density of the
space is less or equal to $\w_1$. Therefore, the next result can
be proved with almost the same argument that in Theorem
\ref{main2}:

\begin{theorem}
  If $X$ is a regular space where $\2$ has a winning strategy  in

the game
$\gone^{\w_1}(\D, \D)$ then $\pi w(X) \leq 2^\w$.
\end{theorem}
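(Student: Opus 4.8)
The plan is to mirror the proof of Theorem \ref{main2}, replacing the point $p$ and its neighbourhood filter by arbitrary nonempty open sets, the local base by a $\pi$-base, and the relation ``$p\in\overline{\,\cdot\,}$'' by ``is dense''. First I would record the observation stated just before the theorem: letting $\1$ play a fixed dense set at every inning forces $\2$'s winning responses to form a dense set of size at most $\w_1$, so $d(X)\le\w_1$. Fix therefore a dense set $E$ with $|E|\le\w_1\le 2^\w$ and a winning strategy $\sigma$ for $\2$ in $\gone^{\w_1}(\D,\D)$. Then take an elementary submodel $M\prec H(\mu)$ with $E\subseteq M$, $X,\sigma\in M$, $[M]^\w\subseteq M$ and $|M|=2^\w$.

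For a sequence $s\in\D^{<\w_1}$ I call a nonempty open set $V$ \emph{good} (relative to $s$) when every $x\in V$ has the form $x=\sigma(s^\smallfrown D)$ for some $D\in\D$. The key step is that a good open set always exists: set $B_s=\{x\in X:\sigma(s^\smallfrown A)\ne x \text{ for every } A\in\D\}$; if $B_s$ were dense then $B_s\in\D$, and the rule $\sigma(s^\smallfrown B_s)\in B_s$ would contradict the definition of $B_s$ (taking $A=B_s$). Hence $B_s$ is not dense, so $V_s:=X\menos\overline{B_s}$ is a nonempty open good set, definable in $M$ from $s$ and $\sigma$. Putting $\mP=\{V\subseteq X: V\in M \text{ and } V \text{ is good}\}$ gives $|\mP|\le|M|=2^\w$, so it suffices to prove that $\mP$ is a $\pi$-base.

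Suppose it is not. Then some nonempty open $W'$ contains no member of $\mP$, and by regularity I shrink it to a nonempty open $W$ with $\overline W\subseteq W'$; consequently $V\not\subseteq\overline W$, hence $V\menos\overline W\ne\emptyset$, for every $V\in\mP$. Now I run the same $\w_1$-length induction as in Theorem \ref{main2}: having chosen $\1$'s moves $\seq{D_\alpha:\alpha<\xi}$ inside $M$, the sequence $s=\seq{D_\alpha:\alpha<\xi}$ lies in $M$ because $M$ is countably closed, so $V_s\in\mP$ and therefore $V_s\menos\overline W\ne\emptyset$; by density of $E$ I pick $a_\xi\in(V_s\menos\overline W)\cap E\subseteq M$, and since $a_\xi\in V_s$ with $a_\xi\in M$, elementarity supplies $D_\xi\in M\cap\D$ with $\sigma(s^\smallfrown D_\xi)=a_\xi$. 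Carrying this through all $\xi<\w_1$ produces a legal play of $\gone^{\w_1}(\D,\D)$ in which $\2$ follows $\sigma$ yet every chosen point avoids the nonempty open set $W$; thus $\{a_\xi:\xi<\w_1\}$ is not dense and $\2$ loses, contradicting that $\sigma$ is winning.

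The heart of the argument is the good-set lemma, whose density version works precisely because ``not dense'' furnishes a nonempty open set to exploit, playing the role that ``$p\notin\overline{\,\cdot\,}$'' plays in Theorem \ref{main2}. The point demanding the most care is keeping the entire construction inside $M$ throughout the uncountable induction — that $s\in M$ at limit stages via $[M]^\w\subseteq M$, and that each realizing move $D_\xi$ can be located in $M$ via elementarity — together with the use of regularity to convert a failure of the $\pi$-base property into an open set $W$ whose closure is small enough that every good $V\in M$ sticks out of it.
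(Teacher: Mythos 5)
Your proof is correct and follows exactly the route the paper intends: the paper omits the argument, remarking only that the theorem ``can be proved with almost the same argument'' as Theorem \ref{main2}, and your write-up is precisely that adaptation (replacing $\Omega_p$ by $\D$, the good neighbourhood of $p$ by the nonempty open set $X\menos\overline{B_s}$, and the local base by a $\pi$-base), with the supporting observation $d(X)\le\w_1$ supplying the dense set $E$.
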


Comparing Theorems \ref{start} and \ref{main},
one may  be tempted to conjecture that a result similar to
Theorem
\ref{main2} continues to hold
for $\gfin$ instead of $\gone$.
 But,  it turns out that even the difference  between $\gn{2}$
and
$\gone$ can be very big - here $\gn{2}$ is the game where
$\2$ is allowed to take at most  2 points instead of just one.
Indeed, even  the fact
that  \2 always wins the ``short'' game
$\gn{2}(\Omega_p,\Omega_p)$ does not guarantee that \2 has a
winning strategy in the long tightness game, as the following
example from \cite{ABD} shows:

\begin{example}\label{long}
  A zero-dimensional $T_1$ space where $\2$ has a winning
strategy in $\gn{2}(\Omega_p, \Omega_p)$  and $\1$ has a
winning strategy in $\gone^{\w_1}(\Omega_p, \Omega_p)$.
\end{example}

\begin{proof}
  Let $X = \{p\} \cup \w^{<\w_1}$ with the followin topology:
every point other than $p$ is isolated. The basic neighborhoods
at $p$ are of the form
  \[\{p\} \cup \w^{<\w_1} \setminus F\]
  where $F$ is the union of finitely many branches in
the tree $\w^{<\w_1}$. Let us show that $\1$ has a winning
strategy in
$\gone^{\w_1}(\Omega_p, \Omega_p)$. $\1$ starts with $D_0 =
\{\seq{n}: n \in \w\}$. Let $s$ be the choice of $\2$. Note that
then $\1$ can play $D_1 = \{s^\smallfrown n: n \in \w\}$.
Indeed, By playing in this way,  at a certain inning the set of
all choices of $\2$ is a function $s:\alpha +1\to \w$.  Then
$\1$ simply can play $D = \{s^\smallfrown n: n \in \w\}$. Note
that playing like this, at the end all of the choices of $\2$
forms a branch thus $\1$ wins.

Now let us see that $\2$ has a winning strategy for the
$\gn{2}(\Omega_p, \Omega_p)$ game. It is enough to show
that, for each  $n \in \w$,
the set  of all the answers played by \2 in the first $n$ innings
includes a set $\{s_1, ..., s_n\}$   with the property that
no branch contains two elements of it.

Let us proceed by induction.  If, in the first inning,   $\1$
plays $A_1$, then
$\2$
chooses $\{s_1, s_2\} \subset A_1$ such that $s_1$ and $s_2$ are
not in
the same branch. Suppose that at the end of the $n$-th inning,
the set
of all answers of  $\2$ contains a set  $\{s_1, ..., s_n\}$
satisfying our assumption.  Let $A_{n + 1}$ be the play of  $\1$
 at the inning $n+1$.  If there is a point in $A_{n + 1}$ that
lies in a
branch missing $\{s_1,..., s_n\}$, then  $\2$ chooses this point
together with some other one. In  the remaining case, since $p$
is
in the closure of $A_{n+1}$,  there
is at least one $s_i$ and two incompatible  elements  $a_1, a_2
\in A_{n +1}$ such that   $s_i \subset a_1$ and $s_i\subset
a_2$.   The
answer of  $\2$ in the $(n+1)$-th inning will be  just
$\{a_1,a_2\}$.
Observe that  every branch meets the set $\{s_j: j \neq i\} \cup
\{a_1, a_2\}$ in at most one point.
\end{proof}

In the previous proof, we did not use that much information
about the height of the tree. Therefore, we can easily modify the
example to obtain the following:

\begin{proposition}
  There is a zero-dimensional $T_1$  space $X$ and a point $p\in
X$   such that $\2$ has a
winning
strategy in $\gn{2}(\Omega_p, \Omega_p)$, $|X|
= 2^\w$ and $\chi(p,X) > 2^\w$.
\end{proposition}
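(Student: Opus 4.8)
The plan is to reuse the tree space of Example \ref{long}, changing only the \emph{height} of the tree so that the number of branches provably exceeds $2^\w$ while the number of nodes stays equal to $2^\w$. Concretely, I would let $\kappa$ be the least cardinal with $2^\kappa > 2^\w$ (such a cardinal exists and satisfies $\w < \kappa \le 2^\w$, since $2^{2^\w} > 2^\w$ by Cantor, while $2^\w \not> 2^\w$), and set $X = \{p\} \cup 2^{<\kappa}$, topologized exactly as before: every node of the tree is isolated, and the basic neighbourhoods of $p$ are the sets $\{p\} \cup (2^{<\kappa} \setminus F)$, where $F$ is a finite union of branches (maximal chains) of $2^{<\kappa}$. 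As in Example \ref{long}, this is a zero-dimensional $T_1$ space.

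First I would verify the two cardinal computations. For the size, $|X| = |2^{<\kappa}| = \sum_{\alpha < \kappa} 2^{|\alpha|}$; by minimality of $\kappa$ every cardinal $\nu < \kappa$ satisfies $2^\nu \le 2^\w$, so each summand is at most $2^\w$, and since $\kappa \le 2^\w$ there are at most $2^\w$ summands, giving $|X| \le 2^\w$; the level $\alpha = \w$ already contributes $2^\w$ nodes, so $|X| = 2^\w$. For the character, a maximal chain of $2^{<\kappa}$ is exactly a function $\kappa \to 2$, so there are $2^\kappa$ branches. Since two distinct branches agree only on a bounded initial segment, a single branch $b$ can never be contained in a finite union $b_1 \cup \dots \cup b_m$ of branches different from it (above the finite supremum of the splitting levels, $b$ meets none of the $b_j$). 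Hence a local base at $p$ must, for each branch $b$, contain a neighbourhood removing a finite family of branches that includes $b$; as each basic neighbourhood names only finitely many branches, any local base has cardinality at least the number of branches, whence $\chi(p,X) = 2^\kappa > 2^\w$.

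Next I would check that $\2$'s winning strategy in $\gn{2}(\Omega_p, \Omega_p)$ carries over verbatim, since it used nothing about the height. The key point is that $A \in \Omega_p$ precisely when $A$ cannot be covered by finitely many chains, equivalently when $A$ contains finite antichains of every size. Maintaining the invariant that after each inning the set of $\2$'s answers contains an antichain $\{s_1, \dots, s_n\}$, at the next inning $\1$ plays some $A \in \Omega_p$; if some point of $A$ is incomparable with all $s_i$, then $\2$ adds it (together with any second point); otherwise $A$ is covered by the finitely many sets of points comparable with a fixed $s_i$, one of which fails to be a chain, so it provides some $s_i$ together with two incomparable extensions $a_1, a_2 \in A$, and $\2$ plays $\{a_1, a_2\}$, replacing $s_i$ by $a_1, a_2$ and enlarging the antichain. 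At the end $\2$'s choices contain arbitrarily large antichains, so they cannot be covered by finitely many branches; that is, they form a member of $\Omega_p$ and $\2$ wins.

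The only real obstacle is the cardinal arithmetic hidden in the choice of the height: one must take $\kappa$ small enough that $2^{<\kappa}$ still has only $2^\w$ nodes, yet large enough that $2^\kappa$ strictly exceeds $2^\w$. Choosing $\kappa$ to be the least cardinal at which the continuum function leaves the value $2^\w$ achieves both at once, and this is exactly what makes the argument go through in ZFC; in particular it avoids assuming $2^{\w_1} > 2^\w$, which the naive choice of height $\w_1$ would require. Everything else is a direct transcription of Example \ref{long}.
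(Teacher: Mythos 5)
Your proof is correct and follows exactly the route the paper intends: the paper offers no written argument for this proposition beyond the remark that the tree of Example \ref{long} can be modified, and you carry out that modification (same topology, same $\gn{2}$-strategy via growing antichains, same ``a branch is not covered by finitely many other branches'' computation of $\chi(p,X)$). Your one substantive addition is the choice of the height $\kappa$ as the \emph{least} cardinal with $2^\kappa>2^\w$; this is precisely the detail needed to make the statement a theorem of ZFC, since the naive height $\w_1$ only yields $\chi(p,X)=2^{\w_1}$, which is consistently equal to $2^\w$, whereas your minimal $\kappa$ simultaneously keeps $|2^{<\kappa}|=2^\w$ and forces the number of branches, $2^\kappa$, above the continuum. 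All the verifications you give (the cardinal arithmetic, the finite-to-one correspondence between base elements and excluded branches, and the antichain invariant for $\2$) are sound.
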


In particular, this shows that we cannot generalize Theorem
\ref{main2}  for the version where $\2$ is allowed to pick two
points instead of one!

Finally, a simplified version of the above construction gives:

\begin{proposition}
  There is a countable zero-dimensional space$X$  where $\2$ has
a winning strategy
in $\gn{2}(\Omega_p, \Omega_p)$ but $\chi(p,X) > \w$.
\end{proposition}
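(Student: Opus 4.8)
The plan is to adapt the tree construction from Example \ref{long}, where the height $\w_1$ was used only to force $\1$ a win in the long game, by instead shrinking the height to $\w$ while keeping the branching wide enough to violate countable character. Concretely, I would set $X = \{p\} \cup \w^{<\w}$ with every point of $\w^{<\w}$ isolated and basic neighbourhoods of $p$ of the form $\{p\} \cup (\w^{<\w} \setminus F)$, where $F$ is a finite union of branches (maximal chains) of the tree $\w^{<\w}$. This keeps the space countable, since $\w^{<\w}$ is countable, and it remains zero-dimensional $T_1$ by exactly the same verification as in the previous example: each basic neighbourhood is clopen because its complement is a finite union of closed branches together with removing an open (isolated) set.

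Next I would verify $\chi(p,X) > \w$. Suppose toward a contradiction that $\{W_n : n \in \w\}$ is a countable local base at $p$. Each $W_n$ omits a finite union $F_n$ of branches, so $\bigcup_n F_n$ is a countable collection of branches of $\w^{<\w}$. Since $\w^{<\w}$ has $2^{\w}$ many branches (each infinite sequence in $\w^\w$ determines one), I can pick a branch $b$ avoided by every $F_n$; then the basic neighbourhood obtained by deleting $b$ contains no $W_n$, contradicting that $\{W_n\}$ is a base. This is the step I expect to be cleanest, as it reuses the counting already implicit in the long example.

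The heart of the matter is re-running the strategy for $\2$ in $\gn{2}(\Omega_p, \Omega_p)$, which should go through verbatim from Example \ref{long} because that argument never relied on the height being $\w_1$ — it only used that at each inning $\1$ plays a set $A$ with $p \in \overline{A}$, so some $s_i$ already chosen has two incompatible extensions in $A$, or some element of $A$ lies off all previously-used branches. I would maintain the same invariant: after $n$ innings the responses of $\2$ include a set $\{s_1, \dots, s_n\}$ with no two members on a common branch. The inductive step splits into the same two cases and produces a pair $\{a_1, a_2\}$ keeping the invariant. At the end of the $\w$-length game the chosen points form an infinite pairwise-branch-incompatible set, hence meet every branch finitely, so their complement of branches cannot be swallowed by a single deleted branch; thus $p$ lies in the closure of the set of choices and $\2$ wins.

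The main obstacle is confirming that the winning condition for $\2$ really is met at the end of the \emph{short} $\w$-game with the truncated tree: I must check that an infinite set $S \subseteq \w^{<\w}$ meeting every branch in at most finitely many points has $p \in \overline{S}$. This reduces to showing $S$ is not contained in the complement of any basic neighbourhood, \emph{i.e.} $S \not\subseteq F$ for any finite union $F$ of branches — which holds precisely because $S$ is infinite while $F \cap S$ is finite. Once this observation is in place, the whole argument is a direct and routine transcription of the preceding example, so I would present it compactly, emphasizing only the counting of branches and this closure verification.
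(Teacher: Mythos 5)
Your proposal is correct and is exactly the construction the paper intends: the paper gives no separate proof, stating only that "a simplified version of the above construction gives" the result, and truncating the tree of Example \ref{long} to $\w^{<\w}$ with the same branch-complement topology is precisely that simplification. Your two added verifications (that $2^\w$-many branches defeat any countable local base, and that an infinite antichain is not covered by finitely many branches, so $\2$'s invariant still yields a win in the $\w$-length game) are the right points to check and are argued correctly.
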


Inspired by the Example \ref{long}, we finish with a similar
construction that may serve as an example of the ideas used
here. Let $T$ be an uncountable tree
with no uncountable chains (\emph{e. g.} an $\omega_1$-Aronszajn
three)  and consider $X = T \cup \{p\}$ with
the following topology: every point of $T$ is isolated and the
neighborhoods of $p$ are of the form $X \smallsetminus \bigcup F$
where $F$ is a finite collections of branches of $T$. Note that
$\1$ cannot repeat the analogous strategy made in the Example
\ref{long}, since that would imply the existence of an
uncountable branch, which is impossible. Moreover, it is very
easy  for $\2$ to guarantee his own victory. Indeed, it is enough
to him to play in a manner where he ends up by playing
uncountably
many distinct  points. This is enough since in a tree any
uncountable set
contains either an uncountable branch or an infinite antichain.

\smallskip

\section{Acknowledgements}
The authors thank Santi Spadaro for calling their attention
to Example \ref{ex} and Toshimichi Usuba for  his valuable
comments.

The research that led to the present paper was done during the
visit of the first-named author at the University of Catania and
it was partially
supported by
a grant of the
group GNSAGA of INdAM. The first author is also supported by
FAPESP, grant 2017/09252-3.

\smallskip

\bibliographystyle{abbrv}

\def\cprime{$'$}

\end{document}